\newtheorem{thm}{Theorem}[section]
\newcommand{\n}{\noindent}
\newtheorem{conj}[thm]{Conjecture}
\newtheorem{lemma}[thm]{Lemma}
\newenvironment{proof}{{\bf Proof}.}{\rule{3mm}{3mm}}
\title{Every planar graph without 4-cycles adjacent to two triangles is DP-4-colorable}
\author{Runrun Liu  and  Xiangwen Li\thanks{Supported in part by the NSFC (11728102) and the NSFC (11571134)}
\\
\small Department of Mathematics $\&$ Statistics\\
\small Central China Normal University, Wuhan 430079, China
}
\date{}
\begin{document}

\maketitle

\begin{abstract}
  Wang and Lih in 2002 conjectured that every planar graph without adjacent triangles is 4-choosable.  In this paper, we prove that every planar graph without any $4$-cycle adjacent to two triangles is DP-4-colorable, which improves the results of Lam, Xu and Liu [Journal of Combin. Theory, Ser. B, 76 (1999) 117--126], of Cheng, Chen and Wang [Discrete Math., 339(2016) 3052--3057] and of  Kim and Yu [arXiv:1709.09809v1].
\end{abstract}

\section{Introduction}

Coloring is one of the main topics in graph theory. A {\em proper $k$-coloring} of $G$ is a mapping $f: V(G)\to [k]$ such that $f(u)\ne f(v)$ whenever $uv\in E(G)$, where $[k]=\{1,2,\ldots, k\}$. The smallest $k$ such that $G$ has a $k$-coloring is called the {\em chromatic number} of $G$ and is denoted by $\chi(G)$. List coloring was introduced by Vizing \cite{V76},  and independently Erd\H{o}s, Rubin, and Taylor \cite{ERT79}. A {\em list assignment} of a graph $G$ is a function $L$ that assigns to each vertex $v\in V$ a list $L(v)$ of colors. An {\em $L$-coloring} of $G$ is a function $\lambda:V\to\cup_{v\in V}L(v)$ such that $\lambda(v)\in L(v)$ for every $v\in V$ and $\lambda(u)\ne\lambda(v)$ whenever $uv\in E$. A graph $G$ is {\em $k$-choosable} if $G$ has an $L$-coloring for every assignment $L$ with $|L(v)|\ge k$ for each $v\in V(G)$. The {\em choice number}, denoted by $\chi_l(G)$, is the minimum $k$ such that $G$ is $k$-choosable.

The techniques to approach the list problems are less than those used in ordinary coloring. For ordinary coloring, identifications of vertices are involved in the reduction configurations. In list coloring, since  different vertices have different lists, it is no possible for one to use identification of vertices. With this motivation, Dvo\v{r}\'ak and Postle \cite{DP17} introduced correspondence coloring (or DP-coloring) as a generalization of list-coloring. For this literature, the readers can see \cite{B16,BK17,BKZ17,KY17,LLNSY18,LLYY18}. The definitions are as follows.

 A {\em $k$-correspondence assignment} for $G$ consists of a list assignment $L$ on vertices in $V(G)$ such that $L(u)=[k]$ and a function $C$ that assigns every edge $e=uv\in E(G)$ a matching $C_e$ between $\{u\}\times[k]$ and $\{v\}\times[k]$.

A {\em $C$-coloring} of $G$ is a function $\phi$ that assigns each vertex $v\in V(G)$ a color $\phi(v)\in L(v)$, such that for every $e=uv\in E(G)$, the vertices $(u,\phi(u))$ and $(v,\phi(v))$ are not adjacent in $C_e$. We say that $G$ is {\em $C$-colorable} if such a $C$-coloring exists.

The {\em correspondence chromatic number} $\chi_{DP}(G)$ of $G$ is the smallest integer $k$ such that $G$ is $C$-colorable for every $k$-correspondence assignment $(L,C)$.

Two triangles are {\em intersecting} if they have at least one common vertex. Two triangles are {\em adjacent} if they have at least one common edge. Lam, Xu and Liu~\cite{LXL99} proved that every planar graph without 4-cycles is 3-choosable.
Wang and Lih~\cite{WL02} proved that every planar graph without intersecting triangles is 4-choosable and posed the following conjecture.

 \begin{conj}\label{conj1}(Wang and Lih, \cite{WL02})
 Every planar graph without adjacent triangles is 4-choosable.
  \end{conj}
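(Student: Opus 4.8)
The plan is to argue by contradiction using the discharging method, the standard engine for planar list-coloring results of this type. Suppose the conjecture fails, and let $G$ be a vertex-minimal counterexample: a plane graph with no two triangles sharing an edge that admits a list assignment $L$ with $|L(v)|\ge 4$ for all $v$ under which $G$ is not $L$-colorable, while every plane graph with fewer vertices and no adjacent triangles is $4$-choosable. Because the obstruction to using vertex identifications in list coloring is exactly what DP-coloring removes, a natural and arguably cleaner route is to prove the formally stronger statement that every such graph is DP-$4$-colorable; since $\chi_l(G)\le\chi_{DP}(G)$, this would imply the conjecture, and it is the route suggested by the methods developed later in this paper. Either way, minimality lets us delete or contract a small reducible piece, color (or $C$-color) the remainder, and extend.

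First I would collect the easy reducible configurations. A vertex $v$ of degree at most $3$ cannot occur: deleting it, coloring $G-v$ by minimality, and then choosing a color in $L(v)$ avoiding the at most three colored neighbors extends the coloring, so $\delta(G)\ge 4$. Next I would rule out local structures in which a low-degree vertex is surrounded by triangles or short faces, using that the hypothesis forces any two triangles meeting at a common vertex to be separated by other edges, which leaves room to recolor. The substantial work is to show that $G$ cannot contain a $4$-vertex incident to too many triangular faces, nor certain clusters of $4$- and $5$-vertices joined through triangles and $4$-faces; each such configuration should be reducible by a coloring-extension argument that exploits the edge-separation of triangles guaranteed by the hypothesis.

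For the discharging itself I would assign to each vertex $v$ the charge $\deg(v)-4$ and to each face $f$ the charge $\ell(f)-4$, where $\ell(f)$ is the facial length. By Euler's formula the total charge is
\[
\sum_{v\in V}(\deg(v)-4)+\sum_{f\in F}(\ell(f)-4)=-4\bigl(|V|-|E|+|F|\bigr)=-8<0.
\]
The only sources of negative charge are $4$-vertices (charge $0$, hence neutral) and triangular faces (charge $-1$), so the rules must move charge from $5^{+}$-vertices and $5^{+}$-faces to the triangles, typically sending a fixed amount across each incident edge from a large vertex or large face to an adjacent triangle. I would then verify, using the reducibility results together with the fact that no triangle is adjacent to another triangle, that every vertex and every face finishes with non-negative charge, contradicting the displayed negative total.

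The hard part will be twofold. First, because the hypothesis only forbids triangles that share an edge, triangles may still share vertices freely, so a single $5^{+}$-vertex can be incident to many triangles competing for its charge; controlling this competition requires a carefully tuned scheme, probably with second-stage rules that redistribute among faces after the first round. Second, and this is the genuine bottleneck that has kept the conjecture open, one must find a family of reducible configurations rich enough to guarantee, for every vertex and face simultaneously, that the discharging balances. The results of Lam, Xu and Liu and of Wang and Lih handle the more restrictive settings of excluding $4$-cycles or excluding intersecting triangles precisely because those hypotheses eliminate the troublesome vertex-sharing; it is exactly the surviving vertex-adjacent triangles in the present setting that make both the reducibility analysis and the charge accounting delicate.
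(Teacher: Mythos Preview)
The statement you are attempting to prove is a \emph{conjecture}, not a theorem, and the paper does not prove it. Immediately after stating Conjecture~\ref{conj1}, the paper says explicitly that it ``is still open.'' What the paper actually proves is the weaker Theorem~\ref{main}: every planar graph without a $4$-cycle adjacent to two triangles is DP-$4$-colorable. That hypothesis is strictly stronger than merely forbidding adjacent triangles, since it also rules out configurations such as a $4$-cycle with triangles glued on two of its edges. So there is nothing in the paper for your proposal to be compared against.

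As to the proposal itself, it is not a proof but a description of the standard discharging template, together with an honest admission that the crucial step---finding a family of reducible configurations rich enough to make the charges balance when triangles may share vertices freely---is precisely the obstacle that has left the conjecture open. You correctly identify that $\delta(G)\ge 4$ and that triangles are the only negatively charged faces, but the sentence ``The substantial work is to show that $G$ cannot contain a $4$-vertex incident to too many triangular faces, nor certain clusters of $4$- and $5$-vertices joined through triangles and $4$-faces'' is a placeholder, not an argument: you neither specify which configurations are reducible nor give the discharging rules, and you yourself note that this is ``the genuine bottleneck.'' In short, the proposal restates the problem rather than solving it. The paper's own contribution (Theorem~\ref{main}) succeeds exactly because the extra hypothesis on $4$-cycles supplies additional reducible configurations (Lemma~\ref{reduce}) and structural control (Lemma~\ref{counter}) that are not available under the bare hypothesis of Conjecture~\ref{conj1}.
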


 Conjecture~\ref{conj1} is still open.
  There have been a few results about this conjecture since 2002. Recently,  Cheng, Chen and Wang \cite{CCW16} proved that every planar graph without 4-cycle adjacent to 3-cycle is 4-choosable, which was improved to DP-4-colorable by Kim and Yu~\cite{KY17}. As we mention above,
DP-coloring is  as a generation of list coloring.  One naturally try to approach Conjecture~\ref{conj1} by utilizing DP-coloring. Motivated by this observation,  we present the following result in this paper.

\begin{thm}
\label{main}
Every planar graph $G$ without any $4$-cycle  adjacent to two triangles is DP-4-colorable.
\end{thm}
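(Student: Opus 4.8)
The plan is to argue by contradiction via the discharging method, which is the standard engine for results of this flavour (e.g. the theorems of Lam--Xu--Liu, Cheng--Chen--Wang, and Kim--Yu that Theorem~\ref{main} improves). Let $G$ be a counterexample with the fewest vertices: $G$ is a planar graph with no $4$-cycle adjacent to two triangles, $G$ is not DP-$4$-colorable for some $4$-correspondence assignment $(L,C)$, but every proper subgraph of $G$ is. We may assume $G$ is connected and, by a short argument, $2$-connected, so that in a fixed plane embedding every face is bounded by a cycle. First I would establish basic reducibility facts: $G$ has minimum degree at least $4$ (a vertex of degree $\le 3$ can be deleted, the rest coloured by minimality, and the vertex extended since it has fewer than $4$ forbidden correspondents), and more refined configurations around $4$-vertices and short faces are reducible. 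The combinatorial hypothesis---no $4$-cycle is adjacent to two triangles---should be exploited to control how triangles and $4$-faces cluster: in particular, a $4$-face cannot be simultaneously adjacent to two triangular faces, and the triangles incident to a common vertex or edge are limited.

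The heart of the reducibility analysis is a list of forbidden subconfigurations, proved by the usual extension argument: assume $G$ contains the configuration, delete (or in the DP setting, \emph{uncolour}) a small set $S$ of vertices, invoke minimality to $C$-colour $G-S$, and then show the partial colouring extends to $S$. Because DP-coloring forbids identification of vertices, the extension step is done purely by counting available colours at each vertex of $S$ and, when $S$ induces a path or small tree, by colouring greedily in a good order; when $S$ induces a cycle one uses the standard "colour all but one vertex, then the last vertex has a constraint from its two neighbours in $S$ plus its outside neighbours" trick, checking that $4$ colours suffice because the relevant correspondence matchings cannot all conflict. I would collect enough such reducible configurations---light vertices of degree $4$ and $5$, adjacent short faces, specific $(3,3,\ldots)$-face patterns around a vertex---to feed the discharging phase.

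For the discharging phase, assign to each vertex $v$ the charge $d(v)-4$ and to each face $f$ the charge $d(f)-4$; by Euler's formula the total is $-8$. Then I would design discharging rules sending charge from large faces and high-degree vertices to triangles and $4$-faces and to small-degree vertices, e.g. each vertex gives a fixed amount to each incident triangle, and each face of length $\ge 5$ gives a fixed amount across each edge to an adjacent triangle or $4$-face, with the precise fractions tuned so that after discharging every vertex and every face has nonnegative charge. Verifying nonnegativity is a case analysis on the degree of the vertex/face and on the pattern of incident triangles and $4$-faces around it; here the hypothesis "no $4$-cycle adjacent to two triangles" and the reducible configurations from the previous phase are used to rule out the deficient cases. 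Summing the final charges gives a nonnegative total, contradicting $-8$, so no counterexample exists.

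\textbf{Main obstacle.} The delicate point is the interaction in the reducibility step between the DP-correspondence constraints and the presence of triangles and $4$-faces: when the uncoloured set $S$ contains or touches a triangle, the three pairwise correspondence matchings among its vertices need not behave like a proper colouring constraint, so one cannot simply quote "a cycle is list-colourable from its own length." I expect the bulk of the work---and the place where this paper must go beyond the list-coloring predecessors---to be a careful, configuration-by-configuration verification that even in the worst alignment of the matchings $C_e$ there remains a valid extension, together with tuning the discharging rules so that exactly the configurations one can prove reducible are the ones that would otherwise be charge-deficient.
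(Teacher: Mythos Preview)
Your proposal contains a genuine conceptual gap. You write that ``DP-coloring forbids identification of vertices'' and plan to do all reducibility by greedy counting. This is exactly backwards: the reason Dvo\v{r}\'ak and Postle introduced DP-coloring is that it \emph{restores} the identification technique that ordinary list coloring lacks. The paper's key reducibility step (its Lemma~\ref{reduce}) is precisely an identification argument: around an internal $4$-vertex $v$ with neighbours $v_1,v_2,v_3,v_4$ in cyclic order, one first uses Lemma~\ref{straight} to straighten the edges $vv_2$ and $vv_4$, then deletes $v$ (and possibly $v_1,v_3$) and identifies $v_2$ with $v_4$. A DP-colouring of the smaller graph gives $v_2$ and $v_4$ the same colour, so $v$ loses at most three colours and can be finished last. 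Greedy counting alone does not yield this configuration; without it you will not be able to rule out, for instance, a $4$-vertex whose two opposite neighbours are also $4$-vertices, and your discharging will not close.

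A second, related omission is the inductive framework. The paper does not work with a minimal counterexample to Theorem~\ref{main} directly; it first strengthens the statement to a precolouring-extension version (Theorem~\ref{main1}): any DP-colouring of the vertices of a $6^-$-face extends to all of $G$. This stronger hypothesis is what lets one conclude that a minimal counterexample has no separating cycle of length at most $6$ (Lemma~\ref{counter}(d)); and that fact is exactly what is needed to certify that identifying $v_2$ and $v_4$ creates neither a loop, nor a multiple edge, nor a new $4$-cycle adjacent to two triangles, so that the identified graph stays in the class and minimality applies. Your plan has no mechanism for controlling separating short cycles, so even if you had intended to use identification, the inductive step would not go through. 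The discharging scheme (the paper uses $2d(v)-6$, $d(f)-6$, and a special charge on the outer face $D$) is then tailored to this precoloured-boundary setup.
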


 Note that if every planar graph $G$  without any 4-cycle adjacent to two triangles, then $G$ has no adjacent triangles.  Thus, Theorem~\ref{main} generalized the results of Lam, Xu and Liu~\cite{LXL99}, of Cheng, Chen and Wang \cite{CCW16} and of Kim and Yu \cite{KY17}.  In order to show Theorem~\ref{main}, We  prove  a little stronger  theorem, as follows.

\begin{thm}
\label{main1}
Let $G$ be a plane graph without any $4$-cycle adjacent two triangles. Let $S$ be a set of vertices of $G$ such that either $|S|=1$, or $S$ consists of all vertices on a face of $G$. If $|S|\le6$, then for every $C$-coloring $\phi_0$ of $G[S]$, there exists a $C$-coloring $\phi$ of $G$ whose restriction to $S$ is $\phi_0$.
\end{thm}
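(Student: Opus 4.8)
The plan is to argue by contradiction: suppose $G$ is a counterexample minimizing $|V(G)|+|E(G)|$, fix the precolored set $S$ (either a single vertex or the vertex set of a face) together with the $C$-coloring $\phi_0$ of $G[S]$ that cannot be extended. Standard reductions first pin down the local structure. One shows $G$ is connected, $S\subseteq V(G)$ lies on the outer face (when $S$ is a face we may take it to be the outer face), every vertex not in $S$ has degree at least $4$, and $G[S]$ is an induced cycle or single vertex. The key separation-type reductions are: $G$ has no separating cycle of length at most $6$ avoiding the interior issues (otherwise color inside and outside separately using minimality, since $|S|\le 6$ lets the shorter side serve as the new precolored face), no chord of the outer cycle, and no two adjacent vertices of degree $4$ both outside $S$ in certain configurations; also no $4^-$-vertex adjacent to $S$ unless forced. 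These are all of the form ``a small reducible configuration $H$ would let us delete/identify and recolor,'' and DP-coloring permits the deletion-and-extend arguments (one removes a vertex $v$, extends the coloring to $G-v$, then counts: $v$ has at least $d(v)$ constraints but the matchings forbid at most $d(v)$ colors, so if $d(v)\le 3$ or if $v$ has a repeated forbidden color we win).

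Having reduced the structure, I would set up a discharging argument on the plane graph $G$. Assign to each vertex $v$ the charge $d(v)-4$ and to each face $f$ the charge $d(f)-4$; by Euler's formula the total charge is $-8$ (after the usual adjustment that puts the ``$+$'' deficiency on the outer face and $S$). The absence of a $4$-cycle adjacent to two triangles is exactly the condition controlling how triangles and $4$-faces can cluster, so the discharging rules should move charge from big faces and high-degree vertices to triangles and to $4$-vertices, with special care near the precolored face $S$. The goal is to show every vertex and every face ends with nonnegative charge while the outer region absorbs only a bounded positive amount, contradicting the total $-8$.

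The heart of the proof, and the step I expect to be the main obstacle, is the interaction between the reducible-configuration list and the discharging rules: one must choose the rules so that a $4$-vertex $v$ not in $S$ receives enough charge from its incident faces, but a $4$-vertex can be incident to at most one triangle (since two triangles at $v$ would form a configuration we ruled out, or create a $4$-cycle adjacent to two triangles), so its other faces are $5^+$-faces and can donate. The delicate cases are $4$-vertices adjacent to the outer cycle and $5$-faces incident to several $4$-vertices or triangles; here the hypothesis ``no $4$-cycle adjacent to two triangles'' must be invoked repeatedly to bound the number of ``bad'' neighbors of a face. I would also need a separate, careful treatment of the faces and vertices near $S$, using $|S|\le 6$ to guarantee that the precolored cycle, viewed as a face of length at most $6$, contributes charge at most $|S|-4\le 2$ to the deficit, so it cannot by itself swallow the $-8$.

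Finally, to deduce Theorem~\ref{main} from Theorem~\ref{main1}: given any plane graph $G$ without a $4$-cycle adjacent to two triangles and any $4$-correspondence assignment, pick any vertex $v$, set $S=\{v\}$, color $v$ arbitrarily, and extend by Theorem~\ref{main1}; this yields a $C$-coloring of all of $G$, so $\chi_{DP}(G)\le 4$. Since the hypothesis forbids adjacent triangles in particular, the claimed generalizations of the earlier theorems follow immediately.
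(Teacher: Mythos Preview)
Your outline has the right overall architecture (minimal counterexample, structural reductions, discharging), and the derivation of Theorem~\ref{main} from Theorem~\ref{main1} at the end is fine. However, there are two genuine gaps that would cause the argument to fail as written.

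First, your structural claim that ``a $4$-vertex can be incident to at most one triangle (since two triangles at $v$ would form a configuration we ruled out, or create a $4$-cycle adjacent to two triangles)'' is false. If $v$ has neighbours $v_1,v_2,v_3,v_4$ in cyclic order and the two triangles are $[vv_1v_2]$ and $[vv_3v_4]$, no $4$-cycle is forced, and the hypothesis is not violated. The paper explicitly allows $t=2$ for a $4$-vertex and treats it as a separate discharging case. Consequently your inference that ``its other faces are $5^+$-faces and can donate'' also fails: a $4$-vertex with $t\le 1$ can perfectly well be incident with $4$-faces. Your discharging sketch, which rests on these two claims, does not go through.

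Second, and more seriously, you are missing the key reducible configuration. The only reduction you name is ``delete a low-degree vertex and extend,'' which handles $3^-$-vertices but gives nothing for $4$-vertices. The crucial lemma in the paper (Lemma~\ref{reduce}) is an \emph{identification} argument: for an internal $4$-vertex $v$ with $N[v]\cap D=\emptyset$, one straightens the edges $vv_2,vv_4$ (possible in DP-coloring by renaming along a tree), deletes $v$ together with one or two $4$-neighbours, and identifies the opposite neighbours $v_2$ and $v_4$. This yields that (i) at most one of a pair of opposite neighbours of $v$ is a $4$-vertex, and (ii) any triangle at such a $v$ with $t\le 1$ is a $(4,5^+,5^+)$-face. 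These facts are exactly what make the discharging balance; in particular (i) is what forces the ``sources'' of a sink $5$-face to be $5^+$-vertices so they can donate the extra $\tfrac14$. Without this identification step you have no control over $4$-vertices beyond their degree, and no discharging scheme will close. This is precisely the place where DP-coloring buys something list coloring does not, and your proposal does not exploit it.
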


\n{\bf Proof of Theorem~\ref{main}}
By Theorem~\ref{main1}, it suffices to show that $G$ is $C$-colorable for arbitrary $3$-correspondence assignment $C$.  Assume that $G$ has a assignment $C$. Take $S$ to be an arbitrary vertex in $G$.  By Theorem~\ref{main1}, $G$ is $C$-colorable.

\medskip

In the end of this section, we introduce some notations used in the paper. Graphs mentioned in this paper are all simple. Let $K$ be a cycle of a plane graph $G$. We use $int(K)$ and $ext(K)$ to denote the sets of vertices located inside and outside $K$, respectively. The cycle $K$ is called a {\em separating cycle} if $int(K)\ne\emptyset\ne ext(K)$. Let $V$ and $F$ be the set of vertices and faces of $G$, respectively. For a face $f\in F$, if the vertices on $f$ in a cyclic order are $v_1, v_2, \ldots, v_k$, then we write $f=[v_1v_2\ldots v_k]$. Let $b(f)$ be the vertex set of $f$. A $k$-vertex ($k^+$-vertex, $k^-$-vertex) is a vertex of degree $k$ (at least $k$, at most $k$).  A $k$-face ($k^+$-face, $k^-$-face) is a face contains $k$ (at least $k$, at most $k$) vertices. The same notation will be applied to cycles. Let $N(v)$ be the set of all the neighbors of $v$ and let $N[v]=N(v)\cup\{v\}$.
\section{Proof of Theorem~\ref{main1}}

This section is devoted to proving Theorem~\ref{main1}.  Let $(L,C)$ be a $k$-correspondence assignment on $G$.  An edge $uv\in E(G)$ is {\em straight}  if every $(u,c_1)(v,c_2)\in E(C_{uv})$ satisfies $c_1=c_2$.   The following lemma is from (\cite{DP17}, Lemma 7) immediately.

\begin{lemma}\label{straight}
Let $G$ be a graph with a $k$-correspondence assignment $C$. Let $H$ be a subgraph of $G$ which is a tree. Then we may rename $L(u)$ for $u\in H$ to obtain a $k$-correspondence assignment $C'$ for $G$ such that all edges of $H$ are straight in $C'$.
\end{lemma}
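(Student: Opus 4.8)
The lemma says that ``renaming lists along a tree'' is flexible enough to trivialize the matching on every edge of the tree, and I would prove it by encoding a renaming as a tuple of permutations of $[k]$ and then solving a consistency system by propagating outward from a root of $H$.

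First, set up notation. Renaming $L(u)=[k]$ is the same as choosing a permutation $\pi_u$ of $[k]$: after renaming, the color formerly called $c$ at $u$ is called $\pi_u(c)$, and each matching $C_e$ with $e=uv\in E(G)$ is replaced by
\[
C'_e=\bigl\{\,(u,\pi_u(c_1))(v,\pi_v(c_2)) : (u,c_1)(v,c_2)\in C_e\,\bigr\}.
\]
We may assume each $C_e$ is a perfect matching between $\{u\}\times[k]$ and $\{v\}\times[k]$ (otherwise extend it to one; a renaming that straightens the extension straightens the original as well). Then, orienting $e$ as $uv$, there is a bijection $\sigma_e\colon[k]\to[k]$ with $(u,c)(v,\sigma_e(c))\in C_e$ for all $c\in[k]$. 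Substituting into the display, $e$ is straight in $C'$ if and only if $\pi_v(\sigma_e(c))=\pi_u(c)$ for every $c$, i.e.\ if and only if $\pi_v=\pi_u\circ\sigma_e^{-1}$.

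Now I would pick the permutations. Set $\pi_u=\mathrm{id}$ for $u\notin V(H)$ (their choice is irrelevant). Root the tree $H$ at an arbitrary vertex $r$, put $\pi_r=\mathrm{id}$, and process the remaining vertices of $H$ in order of increasing distance from $r$: when $v$ is reached along the tree edge $e=uv$ joining it to its already-processed parent $u$, define $\pi_v:=\pi_u\circ\sigma_e^{-1}$. Since $H$ is a tree, every vertex other than $r$ has exactly one parent, so each $\pi_v$ is defined exactly once, and by construction the identity $\pi_v=\pi_u\circ\sigma_e^{-1}$ holds for every $e\in E(H)$; hence all edges of $H$ are straight in the resulting assignment $C'$. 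That is the whole proof.

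There is no serious obstacle here beyond bookkeeping. Two points deserve care: first, $\sigma_e$ depends on the orientation chosen for $e$, and reversing $e$ replaces $\sigma_e$ by $\sigma_e^{-1}$, which is exactly why the symmetric conclusion (``all edges of $H$ are straight'') is unambiguous; second, acyclicity of $H$ is precisely what makes the system solvable — around a cycle the same recursion would force the composition of the relevant $\sigma_e$'s to equal the identity, which a general correspondence assignment need not satisfy. If one wants the statement for a forest $H$, apply the construction to each component separately.
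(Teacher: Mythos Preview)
Your argument is correct: encoding a renaming as a family of permutations $(\pi_u)$ and propagating $\pi_v:=\pi_u\circ\sigma_e^{-1}$ outward from a root along the tree is exactly the right construction, and the acyclicity of $H$ is precisely what guarantees the system is consistent. The paper does not supply its own proof of this lemma but simply cites it from Dvo\v{r}\'ak and Postle~\cite{DP17}, Lemma~7; your write-up is essentially that standard argument, so there is nothing to contrast.
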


From now on, we always let $C$ be a $4$-correspondence assignment on $G$. Assume that Theorem~\ref{main1} fails. Let $G$ be a minimal counterexample, that is, there exists no $C$-coloring $\phi$ of $G$ whose restriction to $S$ is equal to $\phi_0$ such that
\begin{equation}
 |V(G)| \mbox{ is minimized.}
 \end{equation}
 Subject to (1), the number of edges of $G$ that do not join the vertices of $S$
 \begin{equation}
   |E(G)|-|E(G[S])| \mbox{  is minimized}.
  \end{equation}

When $S$ consists of the vertices of a face, we will always assume that $D$ is the outer face of the embedding of plane graph $G$.  A vertex $v$ or a face $f$ is {\em internal} if $v\notin D$ or $f\ne D$.

For convenience, let $F_k=\{f: \text{ $f$ is a $k$-face and } b(f)\cap D=\emptyset\}$ and $F_k'=\{f: \text{ $f$ is a $k$-face and }$
$b(f)\cap D\ne\emptyset\}$.  A $3$-face in $F_3$ is {\em{special}} if  it contains a $4$-vertex $v$ incident with at most one triangle and $N(v)\cap D=\emptyset$. Let $f$ be a $(4,4,4,4,4^+)$-face in $F_5$ adjacent to five triangles. Let $v$ be the vertex on the triangle but not on $f$. We call $f$ a {\em sink} of $v$ and $v$ a {\em source} of $f$.
The properties in  Lemma~\ref{counter} (a)-(e) is similar to \cite{DP17}.
For completeness, we include the proofs here.

\begin{lemma}\label{counter}
 Each of the following holds:
\begin{enumerate}[(a)]
\item $V(G)\ne S$;
\item $G$ is $2$-connected;
\item each vertex not in $S$ has degree at least $4$;
\item $G$ does not contain separating $k$-cycle for $3\le k\le6$;
\item $S=V(D)$ and $D$ is an induced cycle.
\item If $u$ and $v$ on $D$ are not adjacent, then they have no common neighbor not on $D$.
\item If $f$ is a sink in $G$, then at most one of its source is on $D$.
\end{enumerate}
\end{lemma}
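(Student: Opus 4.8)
The plan is to prove each item by contradiction using the minimality of $G$ recorded in (1)--(2): parts (a)--(d) are the standard ``reducible configuration'' toolbox, (e) normalizes the choice of $S$, and (f)--(g), the genuinely new ingredients, get reduced to (d).

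Part (a) is immediate: if $V(G)=S$ then $\phi_0$ is itself a $C$-coloring of $G=G[S]$ extending $\phi_0$. For (b) I would treat a cut-vertex $x$ (the disconnected case being similar): write $G=G_1\cup G_2$ with $G_1\cap G_2=\{x\}$, arrange $S\subseteq V(G_1)$ (pulling $x$ into $G_1$ if $x\in S$), color $G_1$ by minimality so as to extend $\phi_0$, then color $G_2$ by minimality with prescribed set $\{x\}$ carrying the chosen color of $x$, and glue. For (c), let $v\notin S$ with $d(v)\le 3$; since $v$ is internal, $G-v$ is still a plane graph without a $4$-cycle adjacent to two triangles for which $S$ is a single vertex or the vertex set of the outer face, so by (1) some $C$-coloring of $G-v$ extends $\phi_0$, and since each of the at most three neighbours of $v$ forbids at most one colour of $L(v)=[4]$ through its matching, a colour survives for $v$.

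The crux is (d), to which the rest reduces. For contradiction, take a separating cycle $K$ of length in $\{3,\dots,6\}$ with $|V(K)|$ minimum; then $K$ is chordless, since a chord would split $K$ into two strictly shorter cycles, one of which is still separating and of length at most $6$. Set $G_2=G-int(K)$ and $G_1=G-ext(K)$, with $S\subseteq V(G_2)$ after possibly swapping the two. Color $G_2$ by minimality extending $\phi_0$; its restriction to $V(K)$ is a $C$-coloring of $G_1[V(K)]=K$, and since $V(K)$ is exactly the outer face of $G_1$ with $|V(K)|\le 6$, minimality extends it to a $C$-coloring of $G_1$; the two colorings agree on $V(K)$ and glue to one on $G$ extending $\phi_0$. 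For (e), I would first use the standard reduction (see \cite{DP17}) ruling out $|S|=1$, so that $S=V(D)$ with $D$ the outer face; by (b) $D$ is a cycle, and a chord of $D$ would, as in (d), create a separating cycle of length in $\{3,\dots,6\}$ unless $V(G)=V(D)$, a case one rules out because a $2$-connected plane graph on at most $6$ vertices with all non-$S$ vertices of degree $\ge 4$ must contain a $4$-cycle adjacent to two triangles. Hence $D$ is an induced cycle.

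For (f), let $u,v\in V(D)$ be non-adjacent with a common neighbour $w\notin V(D)$. The path $uwv$ together with each of the two arcs of $D$ gives cycles $C_1,C_2$ with $int(C_1)\cup int(C_2)\cup\{w\}=V(G)\setminus V(D)$. If $V(G)\ne V(D)\cup\{w\}$, one of $C_1,C_2$ has nonempty interior, hence is a separating cycle of length in $\{3,\dots,6\}$ (at most $6$ because the complementary arc has at least two edges, since $uv\notin E(G)$), contradicting (d); if $V(G)=V(D)\cup\{w\}$, then $d(w)\ge 4$ by (c) and $w$ has no three consecutive neighbours on $D$ (else a $4$-cycle adjacent to two triangles), which already forces such a configuration on the induced cycle $D$. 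For (g), let $f=[v_1v_2v_3v_4v_5]$ be a sink with source $u_i$ on the triangle through $v_iv_{i+1}$ (indices mod $5$), and suppose $u_i,u_j\in V(D)$ with $i\ne j$. First $u_i\ne u_j$: a common value would make that vertex adjacent to three or four consecutive vertices of $f$, yielding with an edge of $f$ a $4$-cycle adjacent to two triangles. Since $f$ is a $5$-face, the edges $v_iv_{i+1}$ and $v_jv_{j+1}$ are at distance $1$ or $2$ around $f$, so there is a path $P$ from $u_i$ to $u_j$ through vertices of $f$ of length $2$ or $3$; with $Q$ the shorter $D$-arc between $u_i$ and $u_j$, the cycle $P\cup Q$ has length in $\{3,\dots,6\}$, its interior contains a vertex of $f$ (or of the face lying between $u_i$ and $u_j$ at the common vertex of $P$) and its exterior contains a vertex of $D$, so it is a separating cycle contradicting (d).

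I expect (g) to be the main obstacle: one has to check that the short path through the $5$-face really has length $2$ or $3$ in each relative position of the two edges, that $P\cup Q$ has both a nonempty interior and a nonempty exterior, and that the degenerate adjacencies among the sources (including $u_i\sim u_j$) are all killed by the ``no $4$-cycle adjacent to two triangles'' hypothesis. The exclusion of $|S|=1$ in (e) is the other step that requires a genuine argument rather than a one-liner.
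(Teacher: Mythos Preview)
Your outline for (a)--(d) and (f) is exactly the paper's argument, and your plan for (e) is essentially the same as well. One small slip in (e): once you have shown $S=V(D)$, the case $V(G)=V(D)$ is ruled out immediately by (a), since then $V(G)=S$. Your proposed workaround (``a $2$-connected plane graph on at most $6$ vertices with all non-$S$ vertices of degree $\ge 4$ must contain a $4$-cycle adjacent to two triangles'') is both unnecessary and false as stated, because in that case there are no non-$S$ vertices at all.

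The genuine divergence is in (g). The paper does \emph{not} treat the two positions of $u_i,u_j$ uniformly. It first disposes of consecutive sources $u_i,u_{i+1}$ on $D$ directly via (f): one checks $u_iu_{i+1}\notin E(G)$ (otherwise the $4$-cycle $u_i u_{i+1} v_{i+2} v_{i+1}$ is adjacent to the two triangles $u_iv_iv_{i+1}$ and $u_{i+1}v_{i+1}v_{i+2}$), and then $u_i,u_{i+1}$ are non-adjacent on $D$ with common neighbour $v_{i+1}\notin D$, contradicting (f). Only after this does the paper take two non-consecutive sources $u_1,u_3\in V(D)$, so that in particular $u_2\notin V(D)$, and form the length-$3$ path $P=u_1v_2v_3u_3$ together with the shorter $D$-arc $P_1$. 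The point of the preliminary step is that it guarantees $u_2\notin V(K)$: since $u_2$ and the remaining $f$-vertices $v_1,v_4,v_5$ lie on opposite sides of the edge $v_2v_3\subset K$, the cycle $K=P\cup P_1$ is automatically separating.

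Your unified approach runs into exactly the difficulty you flagged. In the $j=i+2$ case your candidate witness for $int(K)\ne\emptyset$ is $u_{i+1}$ (the third vertex of the triangle on $v_{i+1}v_{i+2}$), but nothing in your setup prevents $u_{i+1}$ from itself lying on $D$, and then it can sit on the arc $Q\subset K$, collapsing the argument. You can repair this either by choosing the two $D$-sources to be cyclically closest (so no $D$-source lies strictly between them), or---more cleanly---by doing what the paper does: handle consecutive sources first via (f), which forces at most two $D$-sources, necessarily of the form $u_i,u_{i+2}$, and makes the separating-cycle step go through without further case analysis.
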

\begin{proof}
(a) Suppose otherwise that $V(G)=S$. In this case, $\phi_0$ is a $C$-coloring of $G$, a contradiction.

(b) By the condition (1), $G$ is connected. Suppose otherwise that $v$ is a cut-vertex of $G$. Thus, we may assume that $G=G_1\cup G_2$ such that $V(G_1)\cap V(G_2)=\{v\}$.  If $v\in S$, then by the condition (1)  $G_1$ and $G_2$ have $C$-coloring extending $\phi_0$ such that these $C$-colorings have the same color at $v$.  Thus, $G$ has a $C$-coloring, a contradiction. Thus, assume that $v\notin S$. We assume, without loss of generality, that $S\subseteq V(G_1)$.  By the condition (1), $\phi_0$ can be extended to $\phi_1$ of $G_1$. Then, $\phi_1(v)$ can be extended to $\phi_2$ of $G_2$. Now $\phi_1$ and $\phi_2$ together give an extension of $\phi_0$ to $G$, a contradiction.

(c) Let $v$ be a $3^-$-vertex in $G-S$. By the condition (1), $\phi_0$ can be extended to a $C$-coloring $\phi$ of $G-v$. Then we can extend $\phi$ to $G$ by selecting a color $\phi(v)$ for $v$ such that for each neighbor $u$ of $v$, $(u,\phi(u))(v,\phi(v))\notin E(C_{uv})$, a contradiction.

(d)  Let $K$ be a separating $k$-cycle with $3\le k\le 6$. By the condition (1), $\phi_0$ can be extend to a $C$-coloring $\phi_1$ of $ext(K)\cup K$, and the restriction of $\phi_1$ to $K$ extends to a $C$-coloring $\phi_2$ of $int(K)$. Thus, $\phi_1$ and $\phi_2$ together give a $C$-coloring of $G$ that extends $\phi_0$, a contradiction.

(e) Suppose otherwise that $S=\{v\}$ for some vertex $v\in V(G)$. If $v$ is incident with a $6^-$-cycle $f_1$, we may assume that  $v$ is incident with a $6^-$-face by (d). We now redraw $G$ such that $f_1$ is the outer cycle of $G$ and choose a $C$-coloring $\phi$ on the boundary of $f_1$. Let $S_1=V(f_1)$. In this case, $|E(G_1)|-|E(G[S_1])|<|E(G)|-|E(G[S])|$. By the condition (2), $G_1$ has a $C$-coloring that extends the colors of $S_1$, thus $G$ has a $C$-coloring extends $\phi_0$, a contradiction. Thus, we may assume that all cycles incident with $v$ are $7^+$-cycles.   Let $f_2$ be a $7^+$-face  incident with $v$. Let $v_1$ and $v_2$ be the neighbors of $v$ on $f_2$. Let $G_2=G\cup \{v_1v_2\}$. We redraw $G$ such that $[vv_1v_2]$ is the outer cycle of $G_2$. Let $S_2=\{v,v_1,v_2\}$ and $C_2$ be obtained from $C$ by letting the matching between $v_1$ and $v_2$ be edgeless.  It is easy to verify that $|E(G_2)|-|E(G[S_2])|<|E(G)|-|E(G[S])|$. By the condition (2), $G_2$ has a $C_2$-coloring that extends the colors of $S_2$. This implies that $G$ has a $C$-coloring extends $\phi_0$, a contradiction again. So $S=V(D)$.

We may assume that $D$ contains a chord $uv$.  By (a) $V(G)\not=S$. Thus $D$ together with the chord $uv$ forms two cycles with common edge $uv$, each of which has of length less than 6 by our assumption that $|S|\leq 6$. By (d), such two cycles are the boundaries of two faces. This means that $S=V(G)$, a contradiction to (a).

(f) Let $u$ and $v$ be two non-adjacent vertices on $D$. Suppose otherwise that $w\notin D$ is the common neighbor of them. By (c), $d(w)\ge4$. Since $|D|\le6$, the path $P=uvw$ and $D$ form two $6^-$-cycles.  By (d), $N(w)\subset D$. But this would create a $4$-cycle adjacent to two triangles for any $D$, a contradiction.

(g) Let $u_1,u_2,\ldots,u_5$ be the five sources around $f$ in the clockwise order. Suppose otherwise that there are two sources on $D$. We first claim that at most one of $u_i$ and $u_{i+1}$ is on $D$, where the subscripts are taken modulo 5. By symmetry we may assume that $u_1$ and $u_2$ are on $D$. since a $4$-cycle in $G$ is adjacent to at most one triangle,  $u_1u_2\notin E(D)$,  $u_1$ and $u_2$ are not adjacent and $u_1$ and $u_2$ have a common neighbor not on $D$, a contradiction to (f). Thus, by symmetry, we assume that $u_1$ and $u_3$ are on $D$. By the argument above, we may assume that none of $u_2$ and $u_5$ is on $D$. Let $P$ be the path of length $3$ from $u_1$ to $u_3$ by passing two vertices on $f$. Since $|D|\le6$, let $P_1$ be the shortest path on $D$ between $u_1$ and $u_3$. Then $P_1$ has length at most $3$. Thus, $P$ and $P_1$ form a separating $6^-$-cycle, a contradiction to (d).
\end{proof}

\medskip

The following lemma plays a key role in the proof of  Theorem~\ref{main1}.

\begin{lemma}\label{reduce}
Let $v$ be a $4$-vertex and  $N(v)=\{v_i:1\le i\le4\}$ in a cyclic order. If $N[v]\cap D=\emptyset$, then each of the following holds:

(i) at most one of $v_i$ and $v_{i+2}$ of $v$ is a $4$-vertex.

(ii) If $v$ is incident with at most one triangle $f$, then $f$ must be a $(4,5^+,5^+)$-face.
\end{lemma}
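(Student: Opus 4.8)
The plan is to argue by minimality in both parts, discharging a reduction onto a smaller graph exactly as in Lemma~\ref{counter}(c) but now using the extra structure forced by the $4$-cycle condition. For part (i), suppose both $v_1$ and $v_3$ are $4$-vertices. The key observation is that since $N[v]\cap D=\emptyset$, all of $v,v_1,v_3$ and their incident edges are internal, so I am free to perform an identification-type reduction provided it does not create a $4$-cycle adjacent to two triangles or a short separating cycle. The natural move is to identify $v_1$ and $v_3$ (they are non-adjacent, since $vv_1v_3$ would otherwise be a triangle and $v_1v_3\in E(G)$ together with $v_1vv_3$ would make $v$ incident to a triangle, which is fine, but identifying them would merge a triangle) — more precisely, I would delete $v$ and identify $v_1$ with $v_3$ into a new vertex $v^\ast$. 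One checks that the resulting plane graph $G'$ is smaller, still has no $4$-cycle adjacent to two triangles (here is where the hypothesis that $v$ is a $4$-vertex with $v_1,v_3$ opposite is used: any new short cycle through $v^\ast$ corresponds to a cycle in $G$ through $v_1,v,v_3$, and the girth/adjacency bookkeeping goes through), and $S$ is untouched. By minimality $G'$ has a $C$-coloring extending $\phi_0$; pulling it back colors $v_1$ and $v_3$ identically, leaving at least $4-3=1$ color available for $v$ at the four neighbors (two of which now share a color), contradiction. The delicate point is verifying the $4$-cycle-adjacent-to-two-triangles condition is preserved under the identification; I expect to handle this by a short case analysis on which faces around $v,v_1,v_3$ are triangles, using Lemma~\ref{counter}(d) to rule out separating $6^-$-cycles that would otherwise obstruct the contraction being planar in the intended way.

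For part (ii), assume $v$ is incident with at most one triangle $f=[vv_1v_2]$ (say), and suppose for contradiction $f$ is not a $(4,5^+,5^+)$-face, so without loss of generality $d(v_1)\le 4$; by (c) and since $N(v)\cap D=\emptyset$ we get $d(v_1)=4$. Now $v_1$ is a $4$-vertex incident with the triangle $f=[vv_1v_2]$. The strategy is again to reduce: I would delete the edge $vv_1$ (not the vertex), obtaining $G'=G-vv_1$ with $|E(G')|-|E(G'[S])|<|E(G)|-|E(G[S])|$, and modify $C$ by deleting the matching $C_{vv_1}$. By minimality (2), $G'$ has a $C$-coloring $\phi$ extending $\phi_0$. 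This $\phi$ fails to extend to $G$ only if $\phi(v)$ and $\phi(v_1)$ are matched in $C_{vv_1}$; to fix this I re-color: since $v$ has only three remaining neighbors $v_2,v_3,v_4$ in $G'$ constraining it (one fewer than its degree), $v$ has at least $4-3=1$ free color, but that is exactly zero slack, so I need the triangle structure. The point of $f$ being a triangle is that $v_2\in N(v)\cap N(v_1)$, so when I try to recolor $v$ I may simultaneously recolor $v_1$: treat $\{v,v_1\}$ as a path (straighten the edge $vv_1$ via Lemma~\ref{straight}), and count colors forbidden on the pair $v,v_1$ by their other neighbors $v_2,v_3,v_4$ and $v_2$ plus the list of the triangle's third vertex — a kernel/greedy count on the two vertices with $4+4$ colors against at most $3+3+1$ constraints gives a valid recoloring. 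The main obstacle here is making the two-vertex recoloring count airtight when $v_3$ or $v_4$ coincides with a neighbor of $v_1$; I would resort to Lemma~\ref{counter}(d) (no separating $5^-$-cycle) to show such coincidences force a small separating cycle and are therefore impossible, which cleans up the count.

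In both parts the reduction is to a strictly smaller instance with the same prescribed set $S$, so minimality applies verbatim; the only real work is checking that the reduced graph still satisfies the hypothesis ``no $4$-cycle adjacent to two triangles'' and contains no separating $6^-$-cycle. I expect that step — rather than the coloring-extension count, which is a routine pigeonhole on $4$-element lists — to be where the argument needs care, and the condition $N[v]\cap D=\emptyset$ (resp.\ $N(v)\cap D=\emptyset$) is precisely what licenses the contraction/edge-deletion to take place entirely in the interior, away from the protected boundary face $D$.
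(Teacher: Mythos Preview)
Your proposal diverges from the paper's argument and contains a genuine gap in each part.

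For part (i) you delete $v$ and identify $v_1$ with $v_3$, then argue that since $v_1,v_3$ receive the same color, $v$ sees at most three forbidden colors. That inference is valid for list coloring but \emph{not} for DP-coloring: if $(v_1,c)$ is matched to $(v,a)$ in $C_{vv_1}$ while $(v_3,c)$ is matched to $(v,b)$ in $C_{vv_3}$ with $a\ne b$, then the common label $c$ still forbids two distinct colors at $v$. The fix is to straighten $vv_1$ and $vv_3$ before identifying, via Lemma~\ref{straight}; you invoke that lemma in part~(ii) but not here, and without it the count $4-3=1$ fails. The paper takes the mirror route: it straightens $vv_2,vv_4$, deletes $\{v_1,v,v_3\}$, and identifies $v_2$ with $v_4$. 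One dividend of that choice is that any short $v_2$--$v_4$ path in the reduced graph, together with $v_2vv_4$, bounds a cycle from which $v_1$ and $v_3$ have already been removed, so it is automatically separating and Lemma~\ref{counter}(d) disposes of the hypothesis check in one line; your version needs a genuine case analysis at that step.

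For part (ii) the edge-deletion idea does not close. After deleting $vv_1$ and obtaining a coloring of $G-vv_1$, uncoloring both $v$ and $v_1$ leaves each with three colored neighbors, hence $|L^*(v)|\ge 1$ and $|L^*(v_1)|\ge 1$; but a two-vertex path with lists of size $1$ need not be DP-colorable, since the two surviving colors may be matched in $C_{vv_1}$. The shared neighbor $v_2$ does not buy anything: the matchings $C_{vv_2}$ and $C_{v_1v_2}$ are independent, so $\phi(v_2)$ can forbid different colors on $v$ and on $v_1$, and the global ``$4+4$ versus $3+3+1$'' tally is not the relevant obstruction. The paper instead reuses the same reduction as in (i): straighten $vv_2,vv_4$, delete $\{v,v_1\}$, identify $v_2$ with $v_4$. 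Then $v$ has only $v_3$ and the merged vertex as colored neighbors, so $|L^*(v)|\ge 2$, while $|L^*(v_1)|\ge 1$; coloring $v_1$ first and $v$ last succeeds. That extra unit of slack on $v$, coming from the identification, is precisely what your edge-deletion cannot manufacture.
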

\begin{proof}
Let  $f_i$ be the face  with vertices $v_i, v,v_{i+1}$ on its boundary,  where the subscripts are taken modulo $4$ for $1\le i\le4$.

(i)  We suppose otherwise that $d(v_1)=d(v_3)=4$ by symmetry. By Lemma~\ref{straight}, we can rename each of $L(v_2),L(v),L(v_4)$ to make $C_{vv_2}$ and $C_{vv_4}$ straight. Let $G'$ be the graph by identifying $v_2$ and $v_4$ of $G-\{v_1,v,v_3\}$ and let $C'$ be the restriction of $C$ to $E(G')$. Since $\{v_2,v_4\}\cap D=\emptyset$, the identification does not create an edge between vertices of $S$, and thus $\phi_0$ is also a $C'$-coloring of the subgraph of $G'$ induced by $S$. If there exists a path $P$ of length at most 4 between $v_2$ and $v_4$, then $G$ would have a separating $6^-$-cycle, contrary Lemma~\ref{counter}(d). This implies that $G'$ contains no $4$-cycles adjacent to two triangles. Also $G'$ contains no loops or parallel edges. Thus, $C'$ is also a $4$-correspondence assignment on $G'$. Since $|V(G')|<|V(G)|$,  $\phi_0$ can be extended to  a $C'$-coloring $\phi$ of $G'$  by (1). For $x\in\{v_1,v,v_3\}$, let $L^*(x)=L(x)\setminus\cup_{ux\in E(G)}\{c'\in L(x):(u,c)(x,c')\in C_{ux}$ and $(u,c)\in \phi\}$. Then $|L^*(v_1)|=|L^*(v_1)|\ge1$, and $|L^*(v)|\ge3$. So we can extend $\phi$ to a $C$-coloring of $G$ by coloring $v_2$ and $v_4$ with the color of the identifying vertex and then color $v_1,v_3,v$ in order, a contradiction.

(ii) By Lemma~\ref{counter} (c) each neighbor of $v$ has degree at least $4$.  Suppose otherwise that $f_1=[vv_1v_2]$ is not a $(4,5^+,5^+)$-face. By symmetry let $d(v_1)=4$.  By Lemma~\ref{straight}, we can rename each of $L(v_2),L(v),L(v_4)$ to make $C_{vv_2}$ and $C_{vv_4}$ straight. Let $G'$ be the graph by identifying $v_2$ and $v_4$ of $G-\{v_1,v\}$ and let $C'$ be the restriction of $C$ to $E(G')$. Since $\{v_2,v_4\}\cap D=\emptyset$, the identification does create an edge between vertices of $S$, and thus $\phi_0$ is also a $C'$-coloring of the subgraph of $G'$ induced by $S$. If there exists a path $P$ of length at most 4 between $v_2$ and $v_4$, then $G$ would have a cycle $K$ of length at most 6 which is obtained from the path $v_2vv_4$  and $P$. By our assumption, both $f_2$ and $f_3$ are $4^+$-faces. If $v_3\in P$, then $|P|\ge4$. Furthermore, if $|P|=4$, then both $f_2$ and $f_3$ are $4$-faces. Since each of $f_2$ and $f_3$ is incident to at most one triangle and since $f_1$ is a triangle, the new $4$-cycle created by $P$ in $G'$ is  adjacent to at most one triangle. If $v_3\notin P$, then $K$ is a separating $6^-$-cycle, contrary to Lemma~\ref{counter}(d). This implies that $G'$ contains no $4$-cycles adjacent to two triangles. Since $G$ contains no separating $3$-or $4$-cycle by Lemma~\ref{counter}(d), $G'$ contains no loops or parallel edges. Thus, $C'$ is also a $4$-correspondence assignment on $G'$. Since $|V(G')|<|V(G)|$,  $\phi_0$ can be extended to  a $C'$-coloring $\phi$ of $G'$  by (1). For $x\in\{v_1,v\}$, let $L^*(x)=L(x)\setminus\cup_{ux\in E(G)}\{c'\in L(x):(u,c)(x,c')\in C_{ux}$ and $(u,c)\in \phi\}$. Then $|L^*(v)|\ge2$ and $|L^*(v_1)|\ge1$. So we can extend $\phi$ to a $C$-coloring of $G$ by coloring $v_2$ and $v_4$ with the color of the identifying vertex and then color $v_1$ and $v$ in order, a contradiction.
\end{proof}

\medskip

We are now ready to present a discharging procedure that will complete the proof of the Theorem~\ref{main1}.  Let each vertex $v\in V(G)$ have an initial charge of $\mu(v)=2d(v)-6$,  each face $f\not=D$ have an initial charge of $\mu(f)=d(f)-6$, and $\mu(D)=d(D)+6$. By Euler's Formula, $\sum_{x\in V\cup F}\mu(x)=0$. Let $\mu^*(x)$ be the charge of $x\in V\cup F$ after the discharge procedure. To lead to a contradiction, we shall prove that $\mu^*(x)\ge 0$ for all $x\in V\cup F$ and $\mu^*(D)$ is positive.

\noindent Let $t$ be the number of incident triangles of $v$. The discharging rules are as follows.

\begin{enumerate}[(R1)]
\item \label{4vertex} Let $v\notin D$ be a $4$-vertex.
\begin{enumerate}[(a)]
\item \label{41} Let $t\le1$. If $N(v)\cap D=\emptyset$, then $v$ gives $\frac{1}{2}$ to each incident face. If $N(v)\cap D\ne\emptyset$, then $v$ gives $1$ to each incident $3$-face, $\frac{1}{2}$ to each face in $F_4$ or $F_5$ and give its rest charge evenly to other incident faces.

\item \label{42} If $t=2$, then $v$ gives $1$ to each incident $3$-face.
\end{enumerate}

\item\label{5vertex} Let $v\notin D$ be a $5$-vertex.
\begin{enumerate}[(a)]
\item \label{51} If $N(v)\cap D=\emptyset$, then $v$ gives $\frac{5}{4}$ to each special $3$-face, $1$ to each other $3$-face, $\frac{1}{2}$ to each other incident face and $\frac{1}{4}$ to each sink.

\item \label{52} If $N(v)\cap D\ne\emptyset$, then $v$ gives $\frac{5}{4}$ to each incident $3$-face, $\frac{1}{2}$ to each incident face in  $F_4$ or $F_5$, $\frac{1}{4}$ to each sink and gives the rest charge evenly to other incident faces.
\end{enumerate}

\item \label{6+} Each $6^+$-vertex not on $D$ gives $\frac{5}{4}$ to each incident $3$-face, $\frac{1}{2}$ to each other incident face and $\frac{1}{4}$ to each incident sink.

\item \label{D}Each vertex on $D$ gives its initial charge to $D$ and $D$ gives $2$ to each face in $F_3'$, $\frac{7}{4}$ to each other face in $F_k'$ for $k\ge4$.

\end{enumerate}

\begin{lemma}\label{CHECK}
Every vertex $v$ and internal face $f$ in $G$ have nonnegative final charges.
\end{lemma}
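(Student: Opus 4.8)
The plan is a discharging verification carried out by the degree of $v$ and the size of $f$; a number of cases are immediate. A vertex on $D$ ends with charge $0$, since by (R4) it transfers its whole initial charge to $D$. An internal face $f$ with $d(f)\ge 6$ never loses charge, so $\mu^*(f)=d(f)-6\ge 0$. A face $f\ne D$ meeting $D$ receives $2$ (if a triangle) or $\frac74$ (if $d(f)\ge 4$) from $D$ by (R4): for $d(f)\ge 5$ this already suffices; for a triangle in $F_3'$ one further unit is needed, and by Lemma~\ref{counter}(e),(d) such an $f$ has at most two vertices off $D$, each of which has a neighbour on $D$ and so gives $f$ at least $1$ (by (R1a), (R1b), (R2b) or (R3)); for a $4$-face in $F_4'$ only $\frac14$ more is needed, which a short argument — using Lemma~\ref{counter}(d),(f) to see the two $D$-vertices of $f$ must be consecutive on $f$, so that some off-$D$ vertex of $f$ does not spend all its charge on faces of $F_4\cup F_5$ — supplies. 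So the substance is internal faces of sizes $3,4,5$ and vertices of degrees $4,5,6$.

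Two structural facts drive the rest, both consequences of the hypothesis. First, \emph{no two triangular faces are consecutive around a vertex}: if $[zab]$ and $[zbc]$ are faces sharing the edge $zb$ then $abcz$ is a $4$-cycle sharing an edge with each of them, which is forbidden; hence a $d$-vertex lies on at most $\lfloor d/2\rfloor$ triangles, and the two triangles at a $4$-vertex $v$ with $t=2$ are ``opposite'', the two faces between them each sharing an edge with each triangle. Second, \emph{no $4$-face contains a $4$-vertex $v$ with $t=2$}, since such a $4$-face would itself be a $4$-cycle adjacent to two triangles. The second fact finishes $4$-faces in $F_4$: every incident vertex then gives the face $\frac12$ (by (R1a), (R2a), (R2b) or (R3)), for a total of $2$.

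Internal $3$-faces: if $f$ is special, Lemma~\ref{reduce}(ii) makes $f$ a $(4,5^+,5^+)$-face, its $4$-vertex gives $\frac12$ and each $5^+$-vertex $\frac54$, for exactly $3$; if $f$ is not special, each of its $4$-vertices has $t=2$ or a neighbour on $D$ and so gives $1$, and each $5^+$-vertex gives at least $1$, for at least $3$. A $4$-vertex off $D$ ends at $0$ by inspection of (R1), using the first structural fact and the observation that the two faces through the edge from $v$ to any $D$-neighbour lie in $F'$ rather than $F_4\cup F_5$. A $6^+$-vertex of degree $d$ sources at most $T\le\lfloor d/2\rfloor$ sinks and lies on $T$ triangles, hence gives away at most $\frac54 T+\frac12(d-T)+\frac14 T=T+\frac d2\le d\le 2d-6$. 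For a $5$-vertex, with $a$ and $b$ the numbers of special and non-special incident triangles ($a+b\le 2$) and $s$ the number of sinks it sources, nonnegativity reduces to $3a+2b+s\le 6$; since a source of a sink lies on a triangle two of whose vertices are forced to be $4$-vertices, Lemma~\ref{reduce}(ii) shows that triangle cannot be special, so a $5$-vertex with two special incident triangles sources no sink and one with a single special incident triangle sources at most one, which makes the inequality hold throughout.

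The one genuinely delicate point — and the step I expect to require real care — is the internal $5$-faces that are sinks. Such an $f$ may have all five incident vertices be $4$-vertices with $t=2$, contributing nothing, and must then collect $\frac14$ from enough of its (at most five) sources. To show a source $w$ contributes: $w$ lies on a triangle $[u\,u'\,w]$ with $u,u'$ on $f$, and since $f$ abuts five triangles one of $u,u'$, say $u$, is a $4$-vertex and then $t(u)=2$; applying Lemma~\ref{reduce}(i) at $u$ — whose opposite pairs of neighbours each contain at most one $4$-vertex, while the partner of $w$ in one such pair is another vertex of $f$ and hence a $4$-vertex — forces $w$ to be a $5^+$-vertex, so $w$ gives $\frac14$ to $f$ by (R2a), (R2b) or (R3). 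The only sources that can fail are those on $D$, and by Lemma~\ref{counter}(g) at most one source lies on $D$; thus at least four sources contribute $\frac14$ each, $f$ collects at least $1$, and $\mu^*(f)\ge 0$. Organising this cleanly — choosing, for each source, a $4$-vertex of $f$ with all neighbours off $D$ at which to invoke Lemma~\ref{reduce}(i), and absorbing the one possible shortfall against the extra $\frac12$ that a $5^+$ vertex of a $(4,4,4,4,5^+)$ sink contributes — is the concluding case analysis.
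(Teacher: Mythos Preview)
Your proposal is correct and follows essentially the same discharging verification as the paper: the same case split on vertex degree and face size, the same structural observations (no adjacent triangles; a $4$-vertex with $t=2$ lies on no $4$-face), and the same appeals to Lemma~\ref{reduce}(i),(ii) and Lemma~\ref{counter}(g) at the same places. If anything, your sink analysis is more scrupulous than the paper's---the paper asserts without qualification that every off-$D$ source of a sink is a $5^+$-vertex, whereas you correctly note that when the sink is $(4,4,4,4,5^+)$ the choice of a suitable $4$-vertex at which to invoke Lemma~\ref{reduce}(i) may fail for one or two sources, and plan to cover any such shortfall with the $\tfrac12$ contributed by the $5^+$-vertex on $f$.
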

\begin{proof}
We first check the final charges of vertices. Let $v$ be a vertex in $G$. If $v\in D$, then  $\mu^*(v)\ge0$ by (R\ref{D}). Thus, we may assume that  $v\notin D$. By Lemma~\ref{counter}(c), $d(v)\ge4$. Let $d(v)=4$. Recall that $t$ is the number of incident $3$-faces of $v$. Since $G$ contains no adjacent triangles, $t\le2$. Let $t\le1$.  If $N(v)\cap D=\emptyset$, then $v$  gives  $\frac{1}{2}$ to each incident face by (R\ref{41}).  If $N(v)\cap D\ne\emptyset$, then $v$ sends $1$ to each incident $3$-face, $\frac{1}{2}$ to each face in $F_4$ or $F_5$ and give its rest charge evenly to other incident faces. Furthermore, if $t=1$, then $v$ is incident with at most two faces from $F_4$ or $F_5$. Thus, $\mu^*(v)\ge2\times4-6-\max\{\frac{1}{2}\times4, 1+\frac{1}{2}\times2\}=0$. If $t=2$, then by  (R\ref{42})$v$ gives $1$ to each incident $3$-face. So $\mu^*(v)\ge2\times4-6-1\times2=0$.
Next, assume that $d(v)=5$. We first assume that $N(v)\cap D=\emptyset$. By (R\ref{51}) $v$ gives $\frac{5}{4}$ to each special $3$-face, $1$ to each other $3$-face, $\frac{1}{2}$ to each other incident face and $\frac{1}{4}$ to each sink. Let $f$ be a special $3$-face incident with $v$. Note that the $4$-vertex on $f$ is incident with at most one triangle. Thus, the face  adjacent to $f$ but $v$ not on its boundary is not a sink. So $\mu^*(f)\ge2\times5-6-\max\{\frac{5}{4}, 1+\frac{1}{4}\}\times2-\frac{1}{2}\times3=0$. If  $N(v)\cap D\ne\emptyset$, then by (R\ref{52}) $v$ gives $\frac{5}{4}$ to each incident $3$-face, $\frac{1}{2}$ to each incident face in  $F_4$ or $F_5$, $\frac{1}{4}$ to each sink and gives the rest charge evenly to other incident faces. If $t\le1$, then $\mu^*(v)\ge2\times5-6-\frac{1}{4}-\frac{5}{4}-\frac{1}{2}\times4>0$. If $t=2$, then $v$ is incident with at most two faces in $F_4$ or $F_5$. So $\mu^*(v)\ge2\times5-6-2(\frac{1}{4}+\frac{5}{4})-\frac{1}{2}\times2=0$.  Finally, assume that $d(v)\ge6$. By (R\ref{6+}) $v$ gives $\frac{5}{4}$ to each incident $3$-face, $\frac{1}{2}$ to each other incident face and $\frac{1}{4}$ to each incident sink. So $\mu^*(v)\ge2d(v)-6-(\frac{5}{4}+\frac{1}{4})\lfloor\frac{d(v)}{2}\rfloor-\frac{1}{2}\lceil\frac{d(v)}{2}\rceil\ge d(v)-6\ge0$.

Now we check the final charge of internal faces. Since $6^+$-faces are not involved in the discharging procedure, they have non-negative final charge. Let $f$ be a $5^-$-face in $G$.  We first assume that $V(f)\cap D=\emptyset$. Let  $d(f)=3$. If $f$ is special, then by Lemma~\ref{reduce}(ii) $f$ is a $(4,5^+,5^+)$-face. By(R\ref{4vertex}), (R\ref{5vertex}) and (R\ref{6+}) $f$ gets $\frac{1}{2}$ from the incident $4$-vertex and $\frac{5}{4}$ from each incident $5^+$-vertex. If not, then by  (R\ref{4vertex}), (R\ref{5vertex}) and (R\ref{6+}) $f$ gets at least $1$ from each incident vertex. In either case, $\mu^*(f)\ge3-6+\min\{\frac{1}{2}+\frac{5}{4}\times2, 1\times3\}=0$. If $d(f)=4$, then $f$ is adjacent to at most one triangle. So by (R\ref{4vertex}), (R\ref{5vertex}) and (R\ref{6+}) $f$ gets $\frac{1}{2}$ from each incident vertex. So $\mu^*(f)\ge4-6-\frac{1}{2}\times4=0$. For $d(f)=5$, let $f=[v_1v_2\ldots v_5]$. By (R\ref{4vertex}), (R\ref{5vertex}) and (R\ref{6+}) $f$ gets $\frac{1}{2}$ from each incident $5^+$-vertex or $4$-vertex that is incident with at most one triangle. If $f$ is incident with at least two $5^+$-vertices or $4$-vertices that is incident with at most one triangle, then $\mu^*(f)\ge5-6-\frac{1}{2}\times2=0$. So we may assume that $f$ is a sink. By Lemma~\ref{counter}(g) at most one of the five sources of $f$ is on $D$ and by Lemma~\ref{reduce}(i) each source not on $D$ is a $5^+$-vertex. So by (R\ref{5vertex}) and (R\ref{6+}) each source not on $D$ gives $\frac{1}{4}$ to $f$. So $\mu^*(f)\ge5-6-\frac{1}{4}\times4=0$.

Now we assume that $V(f)\cap D\ne \emptyset$. If $d(f)=3$, then at least one vertex on $f$ is not on $D$ by Lemma~\ref{counter}(e), which gives $1$ to $f$ by (R\ref{4vertex}), (R\ref{5vertex}) and (R\ref{6+}). By (R\ref{D}) $D$ gives $2$ to $f$. So $\mu^*(f)\ge3-6+2+1\ge0$. If $d(f)=4$, then by Lemma~\ref{counter}(e) and (f) $f$ either share one vertex with $D$ or one edge with $D$.  Let $f=[v_1v_2v_3v_4]$. In the former case, by symmetry we may assume that $V(f)\cap D=\{v_1\}$. Since $f$ is adjacent to at most one triangle, at least one of $v_1v_2$ and $v_1v_4$ is not on a $3$-face, say $v_1v_2$. Then $f$ gets at least $\frac{2-1-\frac{1}{2}}{2}=\frac{1}{4}$ from $v_2$ if $d(v_2)=4$ by (R\ref{4vertex}), $\frac{4-\frac{5}{4}\times2-\frac{1}{4}\times2-\frac{1}{2}}{2}=\frac{1}{4}$ from $v_2$ if $d(v_2)=5$ by (R\ref{5vertex}), and $\frac{1}{2}$ from $v_2$ if $d(v)\ge6$ by (R\ref{6+}). In addition, by (R\ref{D}) $f$ gets $\frac{7}{4}$ from $D$. So $\mu^*(f)\ge4-6+\frac{7}{4}+\frac{1}{4}=0$. If $d(f)=5$, then $f$ gets $\frac{7}{4}$ from $D$ by by (R\ref{D}). So $\mu^*(f)\ge5-6+\frac{7}{4}>0$.
\end{proof}

\medskip

\n{\bf Proof of Theorem~\ref{main1}}. By Lemmas~\ref{CHECK}, it is sufficient for us to check that the outer face $D$ has positive final charge. Let $E(D, V(G)-D)$ be the set of edges between $D$ and $V(G)-D$ and let $e(D, V(G)-D)$ be its size. By (R\ref{D}) we have
\begin{align}
\mu^*(D)&=d(D)+6+\sum_{v\in D} (2d(v)-6)-2|F_3'|-\frac{7}{4}|F_k'|\\
&=d(D)+6+2\sum_{v\in D} (d(v)-2)-2d(D)-2|F_3'|-\frac{7}{4}|F_k'|\\
&=6-d(D)+\frac{1}{8}e(D,V(G)-D)+\frac{15}{8}e(D,V(G)-D)-2|F_3'|-\frac{7}{4}|F_k'|
\end{align}
where $k\ge4$.

So we may think that each edge $e\in E(C,V(G)-C)$ carries a charge of $\frac{15}{8}$. Then let $f$ be a face including $e$. Let $e$ give $1$ to $f$ if $f$ is a $3$-face, $\frac{7}{8}$ to $f$ otherwise.
Since $G$ contains no adjacent triangles and each face in $F_k'$ for $k\ge3$ contains two edge in $E(C,V(G)-C)$, this implies that $\frac{15}{8}e(D,V(G)-D)-2|F_3'|-\frac{7}{4}|F_k'|\ge0$. By Lemma~\ref{counter}(a) $e(D,V(G)-D)>0$. Then $\mu^*(D)>0$ for any $D$.


\begin{thebibliography}{9}

\bibitem{B16}
A. Bernshteyn, The asymptotic behavior of the correspondence chromatic number, {\em Discrete Math.}, 339 (2016) 2680--2692.

\bibitem{BK17}
A. Bernshteyn, A. Kostochka, Sharp Dirac's Theorem for DP-critical graphs, \emph{arXiv:1609.09122}.

\bibitem{BKZ17}
A. Bernshteyn, A. Kostochka, X. Zhu, DP-colorings of graphs with high chromatic number, \emph{arXiv:1703.02174}.


\bibitem{CCW16}
P. Cheng, M. Chen, Y. Wang, Planar graphs without 4-cycles adjacent to triangles are 4-choosable, {\em Discrete Math.}, 339(2016) 3052--3057.


\bibitem{DP17}
Z. Dvo\v{r}\'{a}k, L. Postle, Correspondence coloring and its application to list-coloring planar graphs without cycles of length $4$ to $8$, {\em J. Combin. Theory}, Ser. B, 129 (2018), 38--54.

\bibitem{ERT79}
P. Erd\H{o}s, A. L. Rubin, H. Taylor, Choosability in graphs, {\em Congr. Numer.}, 26 (1979) 125--157.


\bibitem{KY17}
S.-J. Kim, X. Yu, Planar graphs without $4$-cycles adjacent to triangles are DP-$4$-colorable, \emph{arXiv:1712.08999}

\bibitem{LXL99}
C. B. Lam, W. B. Xu, J.  Liu,
The $4$-choosability of plane graphs without 4-cycles, \emph{J. Combin. Theory}, Ser.B,  76 (1999), 117-¨C126.

\bibitem{LLNSY18}
R. Liu, X. Li, K. Nakprasit, P. Sittitrai, G. Yu, DP-$4$-colorability of planar graphs without given two adjacent cycles, submitted.

\bibitem{LLYY18}
R. Liu, S. Loeb, Y. Yin, G. Yu, DP-3-coloring of some planar graphs, \emph{arXiv:1802.09312}.


\bibitem{V76}
V. G. Vizing, Vertex colorings with given colors, {\em Metody Diskret. Analiz, Novosibirsk}, 29 (1976) 3--10 (in Russian).



\bibitem{WL02}
W. Wang, K. W. Lih, Choosability and edge choosability  of planar graphs without intersecting triabgles, {\em SIAM J.Discrete Math.}, 15 (2002) 538--545.

\end{thebibliography}
\end{document}